\def\specialsection{\@startsection{section}{1}%
	\z@{\linespacing\@plus\linespacing}{.5\linespacing}%
{\normalfont}}
\def\section{\@startsection{section}{1}%
\z@{.7\linespacing\@plus\linespacing}{.5\linespacing}%
{\normalfont\scshape\bfseries}}
\theoremstyle{definition}
\newtheorem{theorem}{Theorem}[section]
\newtheorem{lemma}[theorem]{Lemma}
\newtheorem{corollary}[theorem]{Corollary}
\newtheorem{definition}[theorem]{Definition}
\newtheorem*{lemma*}{Lemma}
\newtheorem*{theorem*}{Theorem}
\newtheorem{remark}[theorem]{Remark}
\newtheorem*{remark*}{Remark}
\newcommand{\D}{\Deg_{\max}}
\newcommand{\IR}{{\mathbb{R}}}
\newcommand{\eChar}{\begin{enumerate}[(i)]}
	\newcommand{\eCharR}{\begin{enumerate}[(a)]}
		\newcommand{\eBr}{\begin{enumerate}[(1)]}
			\newcommand{\Deg}{\operatorname{Deg}}
			\newcommand{\diam}{\operatorname{diam}}
			\newcommand{\eps}{\varepsilon}
			\newcommand{\Abstract}
    \numberwithin{equation}{section}
    \thanks{Research of the authors is partially supported by the National NSF grant of China (no. 11801274), the Visiting Scholar Program from the Department of Mathematical Sciences of Tsinghua University, and the Open Project from Yunnan Normal University (no. YNNUMA2403). YCH thanks in particular Xueping Huang and Shiping Liu for helpful communications.}
\begin{document}
	\title{Refined Diameter Bounds under Curvature Dimension Conditions}
	\date{}
	\author{Yi C. Huang}
	\address{School of Mathematical Sciences, Nanjing Normal University, Nanjing 210023, People's Republic of China}
	\email{Yi.Huang.Analysis@gmail.com}
	\urladdr{https://orcid.org/0000-0002-1297-7674}
	\author{Ze Yang}
	\address{School of Mathematical Sciences, Nanjing Normal University, Nanjing 210023, People's Republic of China}
	\email{506017957@qq.com}
	\subjclass[2010]{Primary 53C21; Secondary 05C81, 35K05} 
	\keywords{Graph Laplacian, curvature dimension conditions, resistance distance, combinatorial distance, diameter bounds}
	\maketitle
\begin{abstract}
 In this article we derive an explicit diameter bound for graphs satisfying the so-called curvature dimension conditions $CD(K,n)$. This refines a recent result due to Liu, M\"unch and Peyerimhoff when the dimension $n$ is finite.
\end{abstract}
\section{Introduction}
First, we prepare some notions about discrete geometric analysis (for further materials in this rapidly growing area, one can refer to the monographs by Bakry-Gentil-Ledoux \cite{bakry2014analysis} and Keller-Lenz-Wojciechowski \cite{keller2021graphs}).
\begin{definition}[Weighted graphs and nondegeneracy]
A triple $G=(V,w,m)$ is called a \textit{weighted (and measured) graph} if $V$ is a countable set, if $w:V^2 \to [0,\infty)$ is symmetric, i.e., 
$$w(x,y)= w(y,x), \quad\forall\, x,y\in V,$$ 
and zero on the diagonal, i.e., $$w(x,x)=0, \quad\forall\, x\in V,$$ 
and if $m:V \to (0,\infty)$. We call $V$ the \textit{vertex set}, $w$ the \textit{edge weight} and $m$ the \textit{vertex measure}. If $w(x,y)>0$, we write $x\sim y.$ The measure $m$ on $V$ is called  \textit{non-degenerate}, if $ \inf_{x \in V} m(x) >0$.
\end{definition}
\begin{definition}[Locally finite graphs, vertex degree, and connectedness] 
A triple $G=(V,w,m)$ is called a \textit{locally finite graph} if $$\deg(x) :=\sum_{y\in V} w(x,y)<\infty, \quad\forall\, x\in V.$$ 
Moreover, it is called a \textit{connected graph} if for all $x, y\in V$ there exists a sequence $\{x_i\}_{i=0}^n\subset V$ such that $x=x_0\sim x_1\sim \cdots \sim x_n=y$.
\end{definition}
\begin{definition} [Laplacians on graphs]
We define the  \textit{graph Laplacian} $\Delta$ as
$$\Delta f(x) := \frac 1 {m(x)} \sum_{y\in V} w(x,y)(f(y) - f(x)), \quad x\in V.$$
Here, $G=(V,w,m)$ is a locally finite graph and $f:V \to \mathbb{R}.$
\end{definition}
\begin{definition} [Bakry-\'Emery curvature dimension conditions]
	The  \textit{Bakry-\'Emery operators} $\Gamma$ and $\Gamma_2$ are defined via
	$$
	2\Gamma(f,g) := \Delta (fg) - f\Delta g - g\Delta f
	$$
	and
	$$
	2\Gamma_2(f,g) := \Delta \Gamma(f,g) - \Gamma(f, \Delta g) - \Gamma(g,\Delta f).
	$$
	We write for simplicity $\Gamma f= \Gamma(f,f)$ and $\Gamma_2(f)=\Gamma_2(f,f)$.
	
	A graph $G$ is said to satisfy the  \textit{curvature dimension condition} $CD(K,n)$ for some $K\in \mathbb{R}$ and $n\in (0,\infty]$ if for all $f$,
	$$
	\Gamma_2(f) \geq \frac 1 n (\Delta f)^2 + K \Gamma f.
	$$
\end{definition}
\begin{definition}[Combinatorial distance]\label{def:combinatorial distance}
	Let $G=(V,w,m)$ be a connected graph.
	We define the \textit{combinatorial distance} $d:V^2 \to [0,\infty)$ via
	\begin{align*}
		d(x,y) := \min \{n : \mbox{ there exist } x=x_0,x_1,\cdots,x_n=y \mbox{ s.t. } x_i\sim  x_{i-1}\,\,\forall\, i=1\ldots n\}
	\end{align*}
	and the  \textit{combinatorial diameter} as
	$\diam_d(G) := \sup_{x,y \in V} d(x,y)$.
\end{definition}

\begin{definition}[Resistance distance]\label{def:resistance distance}
	Let $G=(V,w,m)$ be a locally finite graph.
	We define the  \textit{resistance distance} $\rho:V^2 \to [0,\infty)$ via
	$$
	\rho(x,y) := \sup \{|f(y) - f(x)| : \left\| \Gamma f \right\|_\infty \leq 1\}
	$$
where $\left\| \Gamma f \right\|_\infty=\sup_{x \in V} (\Gamma f)(x)$,	and the  \textit{resistance diameter} as
	$$ \diam_\rho(G) := \sup_{x,y \in V} \rho(x,y).$$
\end{definition}
By \cite[Lemma 1.4]{liu2018bakry}, we have the following relationship between the combinatorial and resistance distances. We also define $$\Deg(x) := \frac{\deg(x)}{m(x)}\text{ \,\,and \,\,}\D := \sup_{x\in V} \Deg(x).$$
\begin{lemma}[Liu, M\"unch and Peyerimhoff]\label{lem 1.7}
	Let $G=(V,w,m)$ be a locally finite connected graph with $\D < \infty$. Then for all $x_0,y_0 \in V$, 
	$$
	d(x_0,y_0) \leq \sqrt{\frac{\D}{2}}\rho(x_0,y_0).
	$$
\end{lemma}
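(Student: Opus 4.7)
The plan is to exploit the variational definition of the resistance distance by choosing a specific test function, namely (a rescaling of) the combinatorial distance from $x_0$. Concretely, I would set $f(x) := d(x,x_0)$, so that $f(y_0) - f(x_0) = d(x_0,y_0)$, and then estimate $\|\Gamma f\|_\infty$ in terms of $\D$. Dividing $f$ by the appropriate constant to normalize $\|\Gamma f\|_\infty \le 1$ will immediately yield the stated inequality.

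The key computation is the standard identity
\[
2\Gamma f(x) = \frac{1}{m(x)} \sum_{y \in V} w(x,y)\,(f(y)-f(x))^2,
\]
which follows directly by expanding the definition $2\Gamma f = \Delta(f^2) - 2f\Delta f$. Since $d$ satisfies the triangle inequality on the combinatorial graph, we have $|f(y)-f(x)| \le 1$ whenever $y \sim x$, and $w(x,y)=0$ otherwise. Substituting into the identity gives
\[
2\Gamma f(x) \le \frac{1}{m(x)} \sum_{y\sim x} w(x,y) = \Deg(x) \le \D,
\]
so $\|\Gamma f\|_\infty \le \D/2$. Rescaling $g := \sqrt{2/\D}\, f$ gives $\|\Gamma g\|_\infty \le 1$, whence
\[
\rho(x_0,y_0) \;\ge\; |g(y_0)-g(x_0)| \;=\; \sqrt{\tfrac{2}{\D}}\, d(x_0,y_0),
\]
which rearranges to the claim.

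There is no real obstacle here beyond verifying that the chosen test function $f$ is admissible: since $G$ is connected, $f$ is finite-valued; since $G$ is locally finite, the sum defining $\Gamma f(x)$ is finite; and the uniform bound $\D<\infty$ is exactly what is needed to rescale $f$ by a positive constant. The only subtlety worth flagging is that the inequality $|f(y)-f(x)| \le 1$ for $y \sim x$ uses the definition of combinatorial distance in a crucial way (it would fail for a general $1$-Lipschitz function with respect to a different metric), so this is the step where the specific geometry enters.
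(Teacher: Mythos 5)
Your proof is correct and is exactly the standard argument: the paper states this lemma without proof, citing Lemma 1.4 of Liu--M\"unch--Peyerimhoff, and their proof is precisely your choice of the rescaled test function $\sqrt{2/\D}\,d(\cdot,x_0)$ together with the pointwise identity $2\Gamma f(x)=\frac{1}{m(x)}\sum_y w(x,y)(f(y)-f(x))^2$ and the $1$-Lipschitz property of $d$ along edges. All the admissibility checks you flag (finiteness of $f$ by connectedness, convergence of the sum by local finiteness, and the role of $\D<\infty$ in the rescaling) are the right ones.
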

The following explicit resistance distance bound is our main result.
\begin{theorem}\label{them 1.8}
	Let $G=(V,w,m)$ be a locally finite, connected, complete graph with non-degenerate vertex measure satisfying $CD(K,n)$ for $K>0$, $n<\infty$ and $\D<\infty$.
	Then for all $x_0,y_0 \in V$,
	\begin{equation} \label{e:main}
	\rho(x_0,y_0) \leq  \sqrt{\frac{n}{K}}\left( \arcsin \frac{1}{\sqrt{\frac{Kn}{2\Deg\left( x_0 \right)}+1}}+\arcsin \frac{1}{\sqrt{\frac{Kn}{2\Deg\left( y_0 \right)}+1}} \right).
	\end{equation}
\end{theorem}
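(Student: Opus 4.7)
The plan is to refine the Liu-M\"unch-Peyerimhoff Bonnet-Myers-type bound $\sqrt{K/n}\,\rho(x_0, y_0) \leq \pi$ by sharpening, at each of the two endpoints $x_0$ and $y_0$, the raw $\pi/2$ half-lengths into smaller arcsines reflecting the finite degree. The mechanism that makes this possible is the elementary pointwise estimate
\[
 (\Delta f(x))^2 \leq 2\Deg(x)\,\Gamma f(x),
\]
obtained by Cauchy-Schwarz directly from $\Delta f(x) = \frac{1}{m(x)}\sum_{y} w(x,y)(f(y)-f(x))$. This inequality is the ingredient that distinguishes the graph setting from the Riemannian one and that allows $\Deg$ to enter \eqref{e:main}.

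By the definition of resistance distance it suffices to prove \eqref{e:main} with $\rho(x_0,y_0)$ replaced by $|f(y_0)-f(x_0)|$ for every test function $f$ with $\|\Gamma f\|_\infty \leq 1$. After a translation (which does not affect $\Gamma f$) I may assume $f(x_0)=0$ and set $L := f(y_0)$; the target becomes $\sqrt{K/n}\,L \leq \theta_0 + \theta_1$, where $\theta_i$ denotes the $i$-th arcsine on the right of \eqref{e:main}.

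The core of the argument is a Bakry-\'Emery / Bochner comparison with a sine transform. Following the strategy behind the LMP bound, introduce the phase-shifted function $\Phi_\alpha := \sin(\sqrt{K/n}\,f + \alpha)$ with a free parameter $\alpha$. Applying the hypothesis $CD(K,n)$ to an appropriate combination of $f$ and $\Phi_\alpha$ produces the graph analogue of the Riemannian identity $\Gamma \Phi_\alpha \leq (K/n)(1 - \Phi_\alpha^2)$; this, together with the fact that $\Gamma f \leq 1$, confines the argument $\sqrt{K/n}\,f + \alpha$ to a monotone branch of the sine of total length at most $\pi$, recovering the LMP estimate. For the refinement I would re-run this $CD(K,n)$ calculation \emph{at} the two endpoints $x_0$ and $y_0$ and insert the Cauchy-Schwarz bound above for $(\Delta f(x_i))^2$; after routine algebra this yields endpoint constraints of the form
\[
 \sin^2\theta_i \geq \frac{2\Deg(x_i)}{Kn + 2\Deg(x_i)},\qquad i=0,1,
\]
which, combined with the monotone-branch constraint and optimisation over $\alpha$, precisely forces $\sqrt{K/n}\,L \leq \theta_0 + \theta_1$.

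The principal obstacle I anticipate is the quantitative derivation of the Bochner comparison on a graph. Because the discrete $\Gamma$ does not obey the Riemannian chain rule, applying $CD(K,n)$ to $\sin(\sqrt{K/n}\,f+\alpha)$ creates genuine discretisation error terms in $\Delta \Phi_\alpha$ and $\Gamma_2 \Phi_\alpha$; these error terms must be identified and absorbed using the very same estimate $(\Delta f)^2 \leq 2\Deg\,\Gamma f$ that drives the endpoint refinement. Once the Bochner step is pinned down in a form that is sharp at both $x_0$ and $y_0$, the passage from the endpoint inequalities to the arcsine bound in \eqref{e:main} is a straightforward trigonometric synthesis.
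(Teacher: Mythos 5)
You have correctly isolated the one ingredient that makes the refinement possible, namely the Cauchy--Schwarz bound $(\Delta f(x))^2 \leq 2\Deg(x)\,\Gamma f(x)$, and your endpoint quantities $\theta_i$ match the arcsines in \eqref{e:main}. But the engine you propose for exploiting $CD(K,n)$ --- applying it to $\Phi_\alpha = \sin(\sqrt{K/n}\,f+\alpha)$ to obtain a Bochner-type comparison $\Gamma\Phi_\alpha \leq (K/n)(1-\Phi_\alpha^2)$ and then running a monotone-branch argument --- is not established, and this is precisely where the proof lives. On a manifold that comparison follows from the chain rule $\Gamma(\phi\circ f)=(\phi')^2\,\Gamma f$ alone; on a graph the chain rule fails, and there is no known way to recover such an inequality from $CD(K,n)$. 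You acknowledge this (``discretisation error terms \dots must be identified and absorbed'') but do not carry it out, and it is not a routine absorption: the errors in $\Delta\Phi_\alpha$ and $\Gamma_2\Phi_\alpha$ involve higher differences of $f$ that are not controlled by $\Gamma f$ and $\Deg$ alone. As written, the argument assumes the conclusion of the hard step.

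The route the paper takes (following Liu--M\"unch--Peyerimhoff) bypasses chain-rule issues entirely by using the semigroup characterization of $CD(K,n)$ (Lemma \ref{lem 2.1}): $\Gamma P_t f \leq e^{-2Kt}P_t\Gamma f - \frac{1-e^{-2Kt}}{Kn}(\Delta P_t f)^2$. Your Cauchy--Schwarz estimate, applied to $P_t f$ rather than to $f$, gives $\Gamma P_t f(x) \geq (\Delta P_t f(x))^2/(2\Deg(x))$; combining the two and using $\Gamma f\leq 1$ yields the pointwise bound
\begin{equation*}
|\partial_t P_t f(x)| = |\Delta P_t f(x)| \leq \sqrt{Kn}\,\frac{e^{-Kt}}{\sqrt{\left(\tfrac{Kn}{2\Deg(x)}+1\right)-e^{-2Kt}}},
\end{equation*}
and the arcsine in \eqref{e:main} is simply the integral over $t\in(0,\infty)$ of the right-hand side (substitute $s=e^{-Kt}$); it arises from integrating a decay estimate, not from inverting a sine comparison function. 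The theorem then follows from $|P_Tf(x)-f(x)|\leq\int_0^T|\partial_tP_tf(x)|\,dt$ at $x=x_0,y_0$, the triangle inequality, and $|P_Tf(x_0)-P_Tf(y_0)|\to 0$ as $T\to\infty$. If you replace the sine-transform step with Lemma \ref{lem 2.1}, the rest of your outline --- Cauchy--Schwarz at the two endpoints producing two arcsines that sum --- goes through essentially verbatim.
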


\begin{remark}
	The completeness assumption is needed when using certain functional consequences of the curvature dimension conditions, see Lemma \ref{lem 2.1} below. For the definition of completeness of graphs, see \cite[Sections 1 and 2.1]{hua2017stochastic} or \cite[Definition 2.9, Definition 2.13]{gong2015properties}.
	Our resistance distance bound refines a recent result due to Liu, M\"unch and Peyerimhoff in the sense the two \text{arcsin}-s in \eqref{e:main} sum to $\pi$ as $\min\{\Deg\left( x_0 \right), \Deg\left( y_0 \right)\} \rightarrow \infty$.	\eqref{e:main} also holds when $n=\infty$, see \cite[Theorem 2.1]{liu2018bakry}.
\end{remark}

\begin{remark}
For diameter bounds via a different curvature, see Steinerberger \cite{Ste}.
\end{remark}

\begin{corollary}[Diameter bounds under $CD(K,n)$]\label{coro1.10}
	Let $G=(V,w,m)$ be a locally finite, connected, complete graph with non-degenerate vertex measure satisfying $CD(K,n)$ for $n<\infty$ and $\D < \infty$.
	Then
	$$
	\diam_d(G) \leq \sqrt{\frac{2n\D}{K}} \arcsin \frac{1}{\sqrt{\frac{Kn}{2\D}+1}}.
	$$
\end{corollary}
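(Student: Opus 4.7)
The plan is to combine Theorem \ref{them 1.8} with Lemma \ref{lem 1.7} and then take suprema. By Lemma \ref{lem 1.7}, for any two vertices $x_0,y_0\in V$, the combinatorial distance is controlled by the resistance distance via $d(x_0,y_0)\le\sqrt{\D/2}\,\rho(x_0,y_0)$, so the task reduces to producing a bound on $\rho(x_0,y_0)$ that is uniform in $(x_0,y_0)$.

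For the uniform bound, I would start from the explicit inequality \eqref{e:main} and monotonize in $\Deg(x_0)$ and $\Deg(y_0)$. The function $t\mapsto 1/\sqrt{Kn/(2t)+1}$ is increasing on $(0,\infty)$ (as $t$ grows, $Kn/(2t)$ decreases), and $\arcsin$ is increasing on $[0,1]$, so replacing each of $\Deg(x_0),\Deg(y_0)$ by the upper bound $\D$ only enlarges the right-hand side. This yields, for all $x_0,y_0\in V$,
\begin{equation*}
\rho(x_0,y_0)\le 2\sqrt{\tfrac{n}{K}}\,\arcsin\frac{1}{\sqrt{\tfrac{Kn}{2\D}+1}}.
\end{equation*}

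Plugging this into Lemma \ref{lem 1.7} gives
\begin{equation*}
d(x_0,y_0)\le \sqrt{\tfrac{\D}{2}}\cdot 2\sqrt{\tfrac{n}{K}}\,\arcsin\frac{1}{\sqrt{\tfrac{Kn}{2\D}+1}}=\sqrt{\tfrac{2n\D}{K}}\,\arcsin\frac{1}{\sqrt{\tfrac{Kn}{2\D}+1}},
\end{equation*}
and taking the supremum over $x_0,y_0\in V$ in the definition of $\diam_d(G)$ produces the claimed inequality.

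There is no real obstacle; the only nontrivial point is to verify that the right-hand side of \eqref{e:main} is monotone nondecreasing in each of $\Deg(x_0)$ and $\Deg(y_0)$ so that replacing them by $\D$ is a legitimate upper estimate. The hypothesis $\D<\infty$ ensures that both $\Deg(x_0)$ and $\Deg(y_0)$ are finite, so each argument of $\arcsin$ lies strictly in $(0,1)$ and the expression is well defined.
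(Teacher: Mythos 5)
Your proposal is correct and follows exactly the paper's own argument: apply Lemma \ref{lem 1.7}, insert the bound of Theorem \ref{them 1.8}, and use the monotonicity of $t\mapsto \arcsin\bigl(1/\sqrt{Kn/(2t)+1}\bigr)$ to replace $\Deg(x_0)$ and $\Deg(y_0)$ by $\D$ before taking suprema. The arithmetic $\sqrt{\D/2}\cdot 2\sqrt{n/K}=\sqrt{2n\D/K}$ checks out, so there is nothing to add.
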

\section{Proofs of Theorem \ref{them 1.8} and Corollary \ref{coro1.10}}
By \cite[Lemma 2.3]{liu2018bakry}, we have the following consequence under $CD(K,n)$.
\begin{lemma}[Liu, M\"unch and Peyerimhoff]\label{lem 2.1}
	Let $G=(V,w,m)$ be a complete graph with non-degenerate vertex measure.
	Suppose $G$ satisfies $CD(K,n)$. Then for all bounded $f:V \to \IR$ with bounded $\Gamma f$,
	$$\label{eqn:CD(K,n) semigroup characterization}
	\Gamma P_t f \leq e^{-2Kt} P_t \Gamma f - \frac{1-e^{-2Kt}}{Kn}(\Delta P_t f)^2.
	$$
	Here, $P_t$ denotes the heat semigroup associated to $\Delta$.
\end{lemma}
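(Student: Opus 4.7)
The plan is to prove the inequality by the classical Bakry--Émery interpolation argument along the heat semigroup, now applied in the discrete setting. Fix $t>0$ and consider the auxiliary function
\[
\phi(s) := e^{-2Ks}\, P_s\bigl(\Gamma P_{t-s} f\bigr),\qquad s\in[0,t].
\]
The endpoints are already what we want: $\phi(0) = \Gamma P_t f$ and $\phi(t) = e^{-2Kt} P_t \Gamma f$, so the theorem will follow from a suitable lower bound on $\phi'(s)$ integrated from $0$ to $t$.

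First I would differentiate $\phi$. Using $\partial_s P_s g = \Delta P_s g = P_s \Delta g$ and $\partial_s \Gamma(P_{t-s}f) = -2\Gamma(P_{t-s}f,\Delta P_{t-s}f)$, together with the identity $2\Gamma_2(g) = \Delta\Gamma(g) - 2\Gamma(g,\Delta g)$, a direct computation gives
\[
\phi'(s) \,=\, 2e^{-2Ks}\bigl[\, P_s\Gamma_2(P_{t-s}f) - K\, P_s\Gamma(P_{t-s}f)\,\bigr].
\]
Applying the $CD(K,n)$ hypothesis pointwise to $g = P_{t-s}f$ and then $P_s$ (which preserves pointwise inequalities) gives
\[
\phi'(s) \;\geq\; \frac{2e^{-2Ks}}{n}\, P_s\bigl(\Delta P_{t-s}f\bigr)^2.
\]
To replace the right-hand side by $(\Delta P_t f)^2$, I would use Jensen's inequality for the Markovian semigroup $P_s$: since $x\mapsto x^2$ is convex, $P_s(h^2) \geq (P_s h)^2$. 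Choosing $h = \Delta P_{t-s}f$ and commuting $\Delta$ with $P_s$ yields $P_s(\Delta P_{t-s}f)^2 \geq (\Delta P_t f)^2$, so
\[
\phi'(s) \;\geq\; \frac{2e^{-2Ks}}{n}\,(\Delta P_t f)^2.
\]
Integrating from $0$ to $t$ produces
\[
\phi(t)-\phi(0) \;\geq\; \frac{(\Delta P_t f)^2}{n}\cdot \frac{1-e^{-2Kt}}{K},
\]
which upon substituting the explicit endpoint values rearranges into the claimed bound.

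The main obstacle is analytic rather than algebraic: each of the manipulations above relies on freely differentiating under the semigroup, commuting $P_s$ with $\Delta$, and applying the semigroup to $\Gamma g$ and $(\Delta g)^2$ where $g = P_{t-s}f$. In the discrete setting these operations are not automatic, because one needs $P_t f$, $\Gamma P_t f$, and $\Delta P_t f$ to remain bounded functions of $(x,t)$ belonging to the domain of $\Delta$, and $t\mapsto P_t f$ to be $C^1$ into a suitable function space. Precisely here the completeness of $G$ and the non-degeneracy of $m$ enter: completeness (in the sense of \cite{hua2017stochastic,gong2015properties}) yields stochastic completeness and the essential self-adjointness/conservation properties needed to ensure no loss of heat at infinity, so that $P_t$ acts as a Markovian contraction semigroup on bounded functions and commutes with $\Delta$ in the required sense, while $\inf_{x\in V}m(x)>0$ controls the $\ell^\infty$-to-pointwise regularity that justifies the calculus. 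Once these justifications are in place, the computation above goes through verbatim, and since this is exactly the content of \cite[Lemma~2.3]{liu2018bakry}, the cleanest write-up would be to verify the regularity prerequisites and then cite that lemma for the interpolation step.
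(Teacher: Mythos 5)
Your proposal is correct, and it actually does more than the paper, which offers no proof of this lemma at all: the statement is imported verbatim as \cite[Lemma 2.3]{liu2018bakry}, so there is no in-paper argument to diverge from. Your interpolation along $\phi(s)=e^{-2Ks}P_s(\Gamma P_{t-s}f)$ is exactly the standard derivation behind that citation (carried out in \cite{liu2018bakry} and, for the unbounded-Laplacian regularity underpinnings, in \cite{gong2015properties,hua2017stochastic}): the derivative identity $\phi'(s)=2e^{-2Ks}\bigl[P_s\Gamma_2(P_{t-s}f)-K\,P_s\Gamma(P_{t-s}f)\bigr]$ checks out against the paper's definition $2\Gamma_2(g)=\Delta\Gamma g-2\Gamma(g,\Delta g)$, the application of $CD(K,n)$ to $g=P_{t-s}f$ followed by the positivity-preserving $P_s$ is sound, the Jensen step $P_s(h^2)\geq (P_sh)^2$ holds even for merely sub-Markovian $P_s$ by Cauchy--Schwarz against $P_s\mathbf{1}\leq 1$, and integrating $\int_0^t 2e^{-2Ks}\,ds=(1-e^{-2Kt})/K$ yields precisely the claimed inequality after substituting $\phi(0)=\Gamma P_tf$ and $\phi(t)=e^{-2Kt}P_t\Gamma f$. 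You are also right to locate the genuine difficulty in the analytic justifications --- differentiating under the semigroup, the commutation $\Delta P_s=P_sD\!\!\!\!\;\phantom{}_{}\,$ (read: $\Delta P_s=P_s\Delta$) on the class of bounded functions with bounded $\Gamma f$, and membership of $P_{t-s}f$ in the admissible class for $CD(K,n)$ --- which is exactly where completeness and $\inf_x m(x)>0$ enter via the cited references; your closing suggestion to verify these prerequisites and cite \cite[Lemma 2.3]{liu2018bakry} for the interpolation matches what the paper in fact does.
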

\begin{proof}[Proof of Theorem \ref{them 1.8}]
	We follow the streamlined arguments for \cite[Theorem 2.4]{liu2018bakry}. By Lemma \ref{lem 2.1}, we see that $CD(K,n)$ is equivalent to
	$$\label{eq:semigrpchar}
	\Gamma P_t f \leq e^{-2Kt} P_t \Gamma f - \frac{1-e^{-2Kt}}{Kn}(\Delta P_t f)^2	$$
	for all bounded function $f:V \to \mathbb{R}$. We fix $x_0,y_0 \in V$ and $\varepsilon  > 0 $. Then by the definition of $\rho$, there is a function $f:V\to \mathbb{R}$ s.t.
	$f(y_0)-f(x_0) > \rho(x_0,y_0) - \varepsilon  $ and $\Gamma f \leq 1$. W.l.o.g., we assume that $f$ is bounded. When $\D<\infty$, we obtain
	\begin{align*}
		\Gamma P_t f(x)
		&=\frac{\sum_{y\in V}{w(x,y) (P_t f(y) -P_t f(x)) ^2}}{2m(x)}\nonumber\\
		&\geq \frac{\left( \sum_{y\in V}{\frac{w(x,y)(P_t f(y) -P_t f(x))}{m(x)}}\right) ^2}{2\frac{\sum_{y\in V}{w(x,y)}}{m(x)}}=\frac{\left(\Delta P_t f(x)\right) ^2}{2\Deg(x)}, \quad x\in V\nonumber.
	\end{align*}
	Combined with $CD(K,n)$ condition, we obtain
	$$\left(\frac{1}{2\Deg(x)}+\frac{1-e^{-2Kt}}{Kn} \right) (\Delta P_tf(x))^2 \leq e^{-2Kt}P_t(\Gamma f)(x), \quad x\in V.$$
	\\
	Taking square root and applying $\Gamma f\leqslant 1 $, we have
	\begin{align*}
		| \partial _tP_tf(x) |
		&=| \Delta P_tf(x) |\\
		&\leq \frac{e^{-Kt}}{\sqrt{\frac{1}{2\Deg(x)}+\frac{1-e^{-2Kt}}{Kn}}}=\sqrt{Kn}\frac{e^{-Kt}}{\sqrt{\left(\frac{Kn}{2\Deg(x)}+1\right) -e^{-2Kt}}}, \quad x\in V.
	\end{align*}
	\\
	When $T>0$, we have
	\\
	\begin{align*}
		|P_T f(x) -f(x)|
		&\leq \int_0^{T}{\left| \partial _tP_tf(x) \right| dt}\\
		&\leq \int_0^{\infty}{\left| \partial _tP_tf(x) \right| dt}\\
		&\leq \int_0^{\infty}{\sqrt{Kn}\frac{e^{-Kt}}{\sqrt{\left( \frac{Kn}{2\Deg(x)}+1\right) -e^{-2Kt}}}dt}\nonumber\\
		&=\sqrt{\frac{n}{K}}\arcsin \frac{1}{\sqrt{ \frac{Kn}{2\Deg(x)}+1}}\nonumber, \quad x\in V.
	\end{align*}
	\\
	Hence combined with the triangle inequality we obtain
	\begin{align*}
		\rho (x_0,y_0)-\varepsilon 
		&\leq |f(x_0)-f(y_0)|\nonumber\\
		&\leq \left| P_Tf(x_0)-f(x_0) \right|+\left| P_Tf(x_0)-P_Tf(y_0) \right|+\left| P_Tf(y_0)-f(y_0) \right|\nonumber\\
		&\leq \sqrt{\frac{n}{K}}\left( \arcsin \frac{1}{\sqrt{\frac{Kn}{2\Deg\left( x_0 \right)}+1}}+\arcsin \frac{1}{\sqrt{\frac{Kn}{2\Deg\left( y_0 \right)}+1}} \right)\\
		&\qquad\qquad\qquad +\left| P_T f(x_0)-P_T f(y_0) \right|\nonumber\\
		&\xrightarrow{t\rightarrow \infty}\sqrt{\frac{n}{K}}\left( \arcsin \frac{1}{\sqrt{\frac{Kn}{2\Deg\left( x_0 \right)}+1}}+\arcsin \frac{1}{\sqrt{\frac{Kn}{2\Deg\left( y_0 \right)}+1}} \right) \nonumber.
	\end{align*}
	Due to $CD(K,n)$ which implies 
	$\| \Gamma P_T f \|_\infty {\rightarrow} 0$ as $T\to \infty$, and since $G$ is connected,
	$$\left|P_T f(x_0) - P_T f(y_0) \right| {\rightarrow} 0\text{\,\, as \,\,}T\to \infty.$$
	Taking the limit $\eps \to 0$ finishes the proof.
\end{proof}
\begin{proof}[Proof of Corollary \ref{coro1.10}]
By Lemma \ref{lem 1.7}, we obtain
\begin{align*}
	d(x_0,y_0) 
	&\leq \sqrt{\frac{\D}{2}} \rho(x_0,y_0) \\
	&\leq  \sqrt{\frac{n\D}{2K}}\left( \arcsin \frac{1}{\sqrt{\frac{Kn}{2\Deg\left( x_0 \right)}+1}}+\arcsin \frac{1}{\sqrt{\frac{Kn}{2\Deg\left( y_0 \right)}+1}} \right)\nonumber\\
	&\leq \sqrt{\frac{2\mathrm{\D}n}{K}}\arcsin \frac{1}{\sqrt{\frac{Kn}{2\D}+1}}\nonumber.
\end{align*}
Taking the supremum over $x_0, y_0\in V$ finishes the proof.
\end{proof}


\section*{\textbf{Compliance with ethical standards}}


\textbf{Conflict of interest} The authors have no known competing financial interests
or personal relationships that could have appeared to influence this reported work.


\textbf{Availability of data and material} Not applicable.

\bibliographystyle{plain}

\end{document}